\newtheorem{teor}{Theorem}[section]
\newtheorem{lema}{Lemma}[section]
\newtheorem{corollary}{Corollary}[section]
\newtheorem{prop}{Proposition}[section]
\newtheorem{remark}{Remark}[section]
\newcounter{theor}
\theoremstyle{definition}
\def\K{\mathcal{K}}
\def\R{\mathbb{R}}
\def\N{\mathbb{N}}
\def\vol{\mathrm{vol}}
\def\C{\mathbb{C}}
\def\S{\mathrm{S}}
\def\W{\mathrm{W}}
\def\inr{\mathrm{r}}
\numberwithin{equation}{section}
\begin{document}

\title[A reverse isoperimetric ineq. for relative outer parallel bodies]{On a reverse isoperimetric inequality for relative outer parallel bodies}

\author[E. Saor\'in G\'omez]{Eugenia Saor\'in G\'omez}
\address{ALTA Institute for Algebra, Geometry, Topology and their Applications,
Universit\"at Bremen,
D-28359 Bremen, Germany} \email{esaoring@uni-bremen.de}

\author[J. Yepes Nicol\'as]{Jes\'us Yepes Nicol\'as}
\address{Departamento de Matem\'aticas, Universidad de Murcia, Campus de
Espinar\-do, 30100-Murcia, Spain} \email{jesus.yepes@um.es}

\thanks{Both authors are supported by ``Programa de Ayudas a Grupos de Excelencia de
la Regi\'on de Murcia'', Fundaci\'on S\'eneca, 19901/GERM/15.
Second author is supported by MINECO/FEDER project
MTM2015-65430-P and MICINN/FEDER project PGC2018-097046-B-I00.}

\subjclass[2010]{52A20, 52A40}

\keywords{Steiner formula, Minkowski sum, relative quermassintegrals, convex sequences}

\begin{abstract}
We show a reverse isoperimetric inequality within the class of re\-lative outer parallel bodies, with respect to a general convex body $E$, along with its equality condition. Based on the convexity of the sequence of quermassintegrals of Minkowski sums we also prove further inequalities.
\end{abstract}

\maketitle

\section{Introduction}
Let $\K^n$ denote the set of all convex bodies in $\R^n$, i.e., the set of all non-empty
compact convex subsets of $\R^n$. For two
convex bodies $K,E\in\K^n$ and a non-negative real number $\lambda$, the
volume $\vol(\cdot)$ (i.e., the Lebesgue measure) of the Minkowski sum $K+\lambda\,E$ is expressed as a polynomial of
degree at most $n$ in $\lambda$, and it is written as
\begin{equation}\label{eq:steiner-vol}
\vol(K+\lambda E)=\sum_{i=0}^n\binom{n}{i}\W_i(K;E)\lambda^i.
\end{equation}
This expression is called {\em Minkowski-Steiner formula} or {\em relative
Steiner formula} of $K$. The coefficients $\W_i(K;E)$ are the {\em
relative quermassintegrals} of $K$, and they are a special case of the
more general defined {\em mixed volumes} for which we refer to
\cite[Chapter~6]{Gruber} and \cite[Section~5.1]{Schneider}.
In particular, we have $\W_0(K;E)=\vol(K)$, $\W_n(K;E)=\vol(E)$,
$\W_i(\lambda_1 K;\lambda_2 E)=\lambda_1^{n-i}\lambda_2^{i}\W_i(K;E)$ for $\lambda_1,\lambda_2\geq0$
and, if $E$ has dimension $\dim(E)=n$, $\W_i(K;E)=0$ if and only if $\dim(K)\leq n-i-1$.
Moreover, $n\W_1(K;E)$ is the (relative) Minkowski content of $K$, which will be denoted by $\S(K;E)$.
Quermassintegrals admit also a Steiner formula, namely
\begin{equation}\label{eq:steiner-quermass}
\W_k(K+\lambda E;E)=\sum_{i=0}^{n-k}\binom{n-k}{i}\W_{k+i}(K;E)\lambda^i,
\end{equation}
for any $0\leq k\leq n$.

\smallskip

A finite sequence of real numbers $(a_0,\dots,a_m)$ is called concave (see e.g. \cite[Section 7.4]{Schneider}) if
\begin{equation}\label{e: conc seq}
a_{i-1}-2a_i+a_{i+1}\leq 0 \text{  for } i=1,\dots, m-1,
\end{equation}
or equivalently, if
\begin{equation*}\label{e: conc seq dif}
a_0-a_1\leq a_1-a_2\leq \dots\leq a_{m-1}-a_m.
\end{equation*}
Moreover, a sequence $(a_0,\dots,a_m)$ is said to be convex if it satisfies the reversed inequality to \eqref{e: conc seq}, i.e., $a_{i-1}-2a_i+a_{i+1}\geq 0$, for $1\leq i\leq m-1$.

As a consequence of this, a concave sequence satisfies the following inequality \cite[(7.59)]{Schneider}:
\begin{equation}\label{e: conc seq dif2}
(k-j)a_i+(i-k)a_j+(j-i)a_k\leq 0
\end{equation}
for any $0\leq i<j<k\leq m$.
In the latter, there is equality if and only if
\[
a_{r-1}-2a_r+a_{r+1}=0 \text{  for } r=i+1,\dots,k-1.
\]

When the sequence $(a_0,\dots,a_m)$ contains only positive numbers, it is called log-concave if the logarithm of the sequence, namely $(\log a_0,\dots,\log a_m)$ is concave, which is equivalent to the following inequalities \cite[Section 7.4]{Schneider}:
\begin{equation}\label{e: log_concave}
a_i^2\geq a_{i-1}a_{i+1} \text{  for } i=1,\dots, m-1.
\end{equation}

For a convex body $K\in\K^n$ the fundamental inequalities
\begin{equation*}\label{e: AF quermass}
\W_i(K;E)^2\geq \W_{i-1}(K;E)\W_{i+1}(K;E),
\end{equation*}
which are consequences of the more general Aleksandrov-Fenchel inequalities for mixed volumes
(see e.g. \cite[Section 7.3]{Schneider}) do yield that the quermassintegrals of $K$ (relative to $E$)
$\left(\W_0(K;E), \dots, \W_n(K;E)\right)$ constitute a log-concave sequence, i.e., they satisfy
\eqref{e: log_concave}.
Furthermore, when dealing with $n$-dimensional convex bodies $K,E\in\K^n$ with a common projection onto a hyperplane, then the sequence of relative quermassintegrals $(\W_0(K;E),\dots,\W_n(K;E))$ is also concave and thus
it satisfies \eqref{e: conc seq dif2} (see \cite[Theorem~7.7.1 and (7.190)]{Schneider}).

Here we show that, moreover, the reverse form of \eqref{e: conc seq dif2} holds when assuming that $E$ is a
\emph{summand} of $K$:
\begin{teor}\label{t:1}
Let $M,E\in\K^n$ with $K=M+E$ and $\dim(E)=n$.
Then, for every $0\leq i<j<k\leq n$,
\begin{equation}\label{e:1}
(k-j)\W_{i}(K;E)+(i-k)\W_{j}(K;E)+(j-i)\W_{k}(K;E)\geq0.
\end{equation}
Equality holds if and only if $\dim(M)\leq1$.
\end{teor}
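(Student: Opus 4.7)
The plan is to show that the sequence $(\W_0(K;E),\W_1(K;E),\ldots,\W_n(K;E))$ is \emph{convex}, i.e., satisfies the reverse of \eqref{e: conc seq}; inequality \eqref{e:1} will then follow as the convex analogue of \eqref{e: conc seq dif2}. The starting point is that, since $K=M+E$, the Steiner formula \eqref{eq:steiner-quermass} applied to $M$ at $\lambda=1$ expresses
\[
\W_k(K;E)=\sum_{i=0}^{n-k}\binom{n-k}{i}\W_{k+i}(M;E),\qquad 0\le k\le n,
\]
as a non-negative integer combination of the quantities $g_l:=\W_l(M;E)\ge 0$.

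Substituting this identity into the second difference $\W_{k-1}(K;E)-2\W_k(K;E)+\W_{k+1}(K;E)$, regrouping by the index $l$ of $g_l$, and repeatedly applying Pascal's rule $\binom{m+1}{j}=\binom{m}{j}+\binom{m}{j-1}$, I expect the telescoping to collapse to the clean identity
\[
\W_{k-1}(K;E)-2\W_k(K;E)+\W_{k+1}(K;E)=g_{k-1}+(n-k-1)\,g_k+\sum_{l=k+1}^{n-2}\binom{n-k-1}{l-k+1}\,g_l,
\]
valid for every $1\le k\le n-1$. Since every coefficient on the right-hand side is a non-negative integer and each $g_l\ge 0$, the right-hand side is non-negative; hence the sequence is convex and \eqref{e:1} follows.

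For the equality case, the reversed version of the characterization stated just after \eqref{e: conc seq dif2} shows that equality in \eqref{e:1} is equivalent to the vanishing of every intermediate second difference $\W_{r-1}(K;E)-2\W_r(K;E)+\W_{r+1}(K;E)=0$ for $r=i+1,\ldots,k-1$ (a non-empty range since $i<j<k$ forces $k\ge i+2$). The decisive observation is that in the displayed identity the coefficient of $g_{n-2}$ is always exactly $1$, as one checks separately in the three sub-cases $k-1=n-2$, $k=n-2$ and $k\le n-3$ (the last using $\binom{n-k-1}{n-k-1}=1$). Therefore the vanishing of any single intermediate second difference already forces $g_{n-2}=\W_{n-2}(M;E)=0$, which by the recalled fact $\W_l(M;E)=0\Leftrightarrow\dim(M)\le n-l-1$ is equivalent to $\dim(M)\le 1$. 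Conversely, if $\dim(M)\le 1$ then $g_l=0$ for every $l\le n-2$, so every intermediate second difference vanishes and equality holds throughout.

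The principal obstacle will be the combinatorial bookkeeping needed to collapse the coefficients to the displayed right-hand side; once that binomial identity is in place, both the inequality and its equality condition follow transparently.
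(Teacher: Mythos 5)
Your plan reproduces the paper's own proof essentially verbatim: convexity of the sequence $(\W_0(K;E),\dots,\W_n(K;E))$ is established exactly as in Lemma~\ref{p:der_n-2_i} (Steiner formula for $M$ at $\lambda=1$ plus Pascal's rule, yielding the coefficient $\binom{n-k-1}{l-k+1}$ for $g_l$, which is indeed the correct collapsed form), inequality \eqref{e:1} then follows by the standard telescoping argument for convex sequences, and the equality case is settled by noting that $\W_{n-2}(M;E)$ always appears with coefficient $1$, so any vanishing second difference forces $\dim(M)\leq1$. The argument is correct and matches the paper's route, so nothing further is needed.
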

By taking $i=0$, $j=1$ and $k=n$ in Theorem \ref{t:1} we get the following result:
\begin{corollary}\label{c:1}
Let $M,E\in\K^n$ with $K=M+E$ and $\dim(E)=n$. Then
\begin{equation*}\label{e: reverse isop}
\vol(K)\geq \frac{S(K;E)}{n-1}-\frac{\vol(E)}{n-1}.
\end{equation*}
Equality holds if and only if $\dim(M)\leq1$.
\end{corollary}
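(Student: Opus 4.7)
The plan is to derive the corollary as a direct specialization of Theorem~\ref{t:1}. Specifically, I would apply inequality \eqref{e:1} with the choice of indices $i=0$, $j=1$, $k=n$, which is permissible under the hypothesis $\dim(E)=n\geq 2$ (for $n=1$ the statement is vacuous, since $\W_1(K;E)=\vol(E)$ makes both sides equal). Substituting these indices yields
\[
(n-1)\W_0(K;E) - n\,\W_1(K;E) + \W_n(K;E) \geq 0.
\]
Now the standard identifications $\W_0(K;E)=\vol(K)$, $\W_n(K;E)=\vol(E)$, together with the definition $\S(K;E)=n\,\W_1(K;E)$ recalled right after \eqref{eq:steiner-vol}, convert the above to
\[
(n-1)\vol(K) - \S(K;E) + \vol(E) \geq 0,
\]
and dividing by $n-1>0$ gives exactly the inequality asserted in the corollary.

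For the equality characterization, I would invoke the equality clause of Theorem~\ref{t:1} for the same triple $(i,j,k)=(0,1,n)$: equality in \eqref{e:1} occurs precisely when $\dim(M)\leq 1$, and since the above rearrangement is an equivalence (no inequalities are loosened along the way), this characterization transfers verbatim to the corollary. There is essentially no obstacle here; the only point worth stating carefully is that $n\geq 2$ so that the division by $n-1$ is legitimate, and that $\dim(E)=n$ is needed to invoke Theorem~\ref{t:1} in the first place. Hence the corollary is a direct, essentially mechanical, consequence of the theorem.
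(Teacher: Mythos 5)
Your proposal is correct and coincides with the paper's own derivation: the authors obtain the corollary precisely by setting $i=0$, $j=1$, $k=n$ in Theorem~\ref{t:1} and rewriting via $\W_0(K;E)=\vol(K)$, $\W_n(K;E)=\vol(E)$ and $\S(K;E)=n\W_1(K;E)$, with the equality case inherited directly. Your added remark about needing $n\geq 2$ for the index choice and the division by $n-1$ is a harmless (and reasonable) clarification.
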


The latter inequality can be regarded as a reverse form of the well-known (relative) \emph{isoperimetric inequality} for $n$-dimensional convex bodies $K,E\in\K^n$ (also referred to in the literature as the \emph{Minkowski first inequality}, see e.g. \cite[Theorem~7.2.1]{Schneider}):
\[S(K;E)^{n}\geq n^n\vol(K)^{n-1}\vol(E),\]
and equality holds if and only if $K=rE$ (up to translations) for some $r>0$.
Another result in this regard is the celebrated reverse isoperimetric inequality, due to Ball \cite{Ba} (with equality
conditions later supplied by Barthe \cite{Bar}), in the classical setting (see also \cite{BaSur} and the references therein).

\smallskip

The case $E=\bigl(1/\lambda\bigr)B_n$ of the Euclidean ball of radius $1/\lambda$ (for $\lambda>0$)
in both Theorem \ref{t:1} and Corollary \ref{c:1}
has been recently obtained in \cite{Tatarko}. There, using Kubota's formula \cite[(5.72)]{Schneider} and induction on the dimension, the authors derive the above relations for the classical quermassintegrals of $K$, i.e., when the relative body $E$ is the Euclidean unit ball $B_n$. Here
we show, by just exploiting the use of the Steiner formula \eqref{eq:steiner-quermass},
that these relations can be extended to the setting of the so-called \emph{Minkowski relative geometry}, this is, when the functionals of convex bodies are evaluated in relation to an arbitrary convex body $E$ rather than $B_n$. This will be shown in Section \ref{s: ineq} of the present paper. Next, in Section \ref{s: further ineq} we prove further inequalities involving quermassintegrals of Minkowski sums of convex bodies.

\section{Relations for the quermassintegrals of Minkowski sums}\label{s: ineq}
We start this section by showing that the sequence $(\W_0(K;E),\dots,\W_n(K;E))$ of (relative) quermassintegrals
is convex when $E$ is a summand of $K$. This will be proven by using the Steiner formula \eqref{eq:steiner-quermass}
jointly with the following property of the binomial coefficients of the numbers $n,k\in\N\cup\{0\}$:
\begin{equation}\label{e: binom}
\binom{n+1}{k}=\binom{n}{k-1}+\binom{n}{k},
\end{equation}
where $\binom{n}{k}:=0$ whenever $k>n$.

\begin{lema}\label{p:der_n-2_i}
Let $M,E\in\K^n$ with $K=M+E$ and $\dim(E)=n$.
Then the sequence $(\W_0(K;E),\dots,\W_n(K;E))$ is convex, i.e., for every $1\leq i\leq n-1$,
\begin{equation}\label{e:der_n-2_i}
\W_{i-1}(K;E)-2\W_i(K;E)+\W_{i+1}(K;E)\geq0.
\end{equation}
Equality holds if and only if $\dim(M)\leq1$.
\end{lema}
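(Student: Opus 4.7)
The plan is to reduce inequality \eqref{e:der_n-2_i} to a nonnegative combination of quermassintegrals of $M$. Since $K=M+E$, I apply the Steiner formula \eqref{eq:steiner-quermass} to $M$ at $\lambda=1$ to obtain, for every $0\le k\le n$,
\[
\W_k(K;E)=\sum_{\ell=0}^{n-k}\binom{n-k}{\ell}\,\W_{k+\ell}(M;E).
\]
Substituting this expansion for $k=i-1,\,i,\,i+1$ into the left-hand side of \eqref{e:der_n-2_i} and collecting the coefficient of each $\W_j(M;E)$, the second difference rewrites as $\sum_j c_j\,\W_j(M;E)$ with
\[
c_j=\binom{n-i+1}{j-i+1}-2\binom{n-i}{j-i}+\binom{n-i-1}{j-i-1}.
\]

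The core step is the identity $c_j=\binom{n-i-1}{j-i+1}$. This is a short manipulation based on two uses of Pascal's identity \eqref{e: binom}: first expand $\binom{n-i+1}{j-i+1}=\binom{n-i}{j-i+1}+\binom{n-i}{j-i}$, then split $\binom{n-i}{j-i}=\binom{n-i-1}{j-i}+\binom{n-i-1}{j-i-1}$ so that the outer $\binom{n-i-1}{j-i-1}$ terms cancel, and finally apply \eqref{e: binom} once more to turn $\binom{n-i}{j-i+1}-\binom{n-i-1}{j-i}$ into $\binom{n-i-1}{j-i+1}$. This telescoping is the heart of the proof and the only place where care is needed: boundary indices (where some binomials vanish by the convention $\binom{a}{b}=0$ for $b<0$ or $b>a$) should be checked separately, which is immediate.

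Inserting the simplified coefficient yields
\[
\W_{i-1}(K;E)-2\W_i(K;E)+\W_{i+1}(K;E)=\sum_{j=i-1}^{n-2}\binom{n-i-1}{j-i+1}\,\W_j(M;E),
\]
and since every $\W_j(M;E)\ge 0$, the inequality \eqref{e:der_n-2_i} follows at once. For the equality case, every binomial in the sum is strictly positive in the displayed range, so equality forces $\W_j(M;E)=0$ for all $i-1\le j\le n-2$. Invoking the standard vanishing criterion $\W_j(M;E)=0$ iff $\dim(M)\le n-j-1$ (valid because $\dim(E)=n$), the binding value $j=n-2$ imposes $\dim(M)\le 1$; conversely, if $\dim(M)\le 1$ then $n-j-1\ge 1\ge\dim(M)$ for every $j\le n-2$, so each term vanishes. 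This yields the stated equivalence $\dim(M)\le 1$.
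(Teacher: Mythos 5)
Your proof is correct and follows essentially the same route as the paper: expand $\W_{i-1}(K;E)-2\W_i(K;E)+\W_{i+1}(K;E)$ via the Steiner formula \eqref{eq:steiner-quermass} applied to $M$, collapse the coefficient of each $\W_j(M;E)$ by repeated use of Pascal's rule \eqref{e: binom} to a single non-negative binomial coefficient, and read off the equality case from the vanishing criterion for relative quermassintegrals. Your simplified coefficient $\binom{n-i-1}{j-i+1}$ and the resulting identity agree with the paper's computation (indeed your displayed final sum is the cleaner form of what the paper writes), and the treatment of the equality case via $\W_{n-2}(M;E)=0\iff\dim(M)\leq1$ is exactly the paper's.
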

\begin{proof}
By the Steiner formula for the quermassintegrals \eqref{eq:steiner-quermass} we get
\begin{equation*}\label{e:usinSt}
\begin{split}
&\W_{i-1}(K;E)-2\W_i(K;E)+\W_{i+1}(K;E)\\
&=\sum_{j=0}^{n-i+1}\binom{n-i+1}{j}\W_{i-1+j}(M;E)
-2\sum_{j=0}^{n-i}\binom{n-i}{j}\W_{i+j}(M;E)\\
&+\sum_{j=0}^{n-i-1}\binom{n-i-1}{j}\W_{i+1+j}(M;E)
=\W_{i-1}(M;E)+\W_{i}(M;E)(n-i-1)\\
&+\sum_{j=1}^{n-i-1}\binom{n-i-1}{j}\W_{i+j}(M;E)
\left(\binom{n-i+1}{j+1}-2\binom{n-i}{j}+\binom{n-i-1}{j-1}\right),
\end{split}
\end{equation*}
and thus, from \eqref{e: binom}, we have that
\begin{equation*}
\begin{split}
\W_{i-1}(K;E)-2\W_i(K;E)+\W_{i+1}(K;E)=\W_{i-1}(M;E)+\W_{i}(M;E)(n-i-1)\\
+\sum_{j=1}^{n-i-2}\binom{n-i-1}{j}\binom{n-i-1}{j+1}\W_{i+j}(M;E)\geq0.
\end{split}
\end{equation*}
Moreover, from the latter identity we may assert that $\W_{i-1}(K;E)-2\W_{i}(K;E)+\W_{i+1}(K;E)=0$ if and only if $\W_{n-2}(M;E)=0$. This completes the proof.
\end{proof}

At this point we would like to compare this result with Bonnesen's inequality in the plane, which establishes that
\begin{equation*}\label{e:Bonnesen}
\W_0(K;E)-2\W_1(K;E)\inr(K;E)+\W_2(K;E)\inr(K;E)^2\leq 0,
\end{equation*}
with equality if and only if $K=L+\inr(K;E)E$ for $\dim L\leq1$.
Here  $\inr(K;E)$ is the {\em relative inradius} of $K$ with respect
to $E$, which is defined by
\[
\inr(K;E)=\sup\{r\geq0:\exists\,x\in\R^n\text{ with } x+r\,E\subset
K\}.
\]
Bonnesen proved this result for $E=B_2$ \cite{Bo}, being the proof of the general case due to Blaschke
\cite[Pages~33-36]{Bla}. This inequality sharpens (in the plane) the Aleksandrov-Fenchel and the isoperimetric inequality, and there is no known generalization of it to higher dimension. Some Bonnesen style inequalities in arbitrary dimension $n$ can be found in e.g. \cite{Bokowski,Brannen,Diskant,Diskant2,HCS,SY} (see also
\cite[Notes for Section~7.2]{Schneider} and the references therein).
\begin{remark}
We notice that if $K,E\in\K^2$ are such that $K=M+\inr(K;E)E$, which implies that $\dim(M)\leq1$,
then Lemma \ref{p:der_n-2_i} for $n=2$ (and such a pair of convex bodies) is just a simple consequence of Bonnesen's inequality.
\end{remark}

\smallskip

Now we derive a more general result for three non-necessarily consecutive (relative) quermassintegrals.
For the sake of clarity, we notice that along its proof we are following the steps of \cite[Pages~399,~400]{Schneider}
(cf. \eqref{e: conc seq dif2}) (see also \cite{Tatarko}).
From now on, and unless we say the opposite, $\W_i:=\W_i(K;E)$.

\begin{proof}[Proof of Theorem \ref{t:1}]
Let $0\leq i<j<k\leq n$ be fixed.
From Lemma \ref{p:der_n-2_i} it follows that
\begin{equation}\label{e:dif>0}
\W_{i}-\W_{i+1}\geq \W_{j-1}-\W_j\geq \W_{k-1}-\W_k\geq 0.
\end{equation}
We notice that the latter inequality follows from the fact that $K=M+E$ and the monotonicity and translation invariance of mixed volumes.

Thus,
\[
\begin{split}
\bigl(\W_{j}-\W_{j+1}\bigr)+\dots+\bigl(\W_{k-1}-\W_k\bigr)&\leq (k-j)\bigl(\W_{j-1}-\W_{j}\bigr)\\
\bigl(\W_{i}-\W_{i+1}\bigr)+\dots+\bigl(\W_{j-1}-\W_{j}\bigr)&\geq (j-i)\bigl(\W_{j-1}-\W_{j}\bigr)
\end{split}
\]
and hence,
\[0\leq\frac{\W_j-\W_{k}}{k-j}\leq\W_{j-1}-\W_{j}\leq\frac{\W_i-\W_j}{j-i},\]
which yields \eqref{e:1}.
Moreover, \eqref{e:1} holds with equality if and only if the same holds for \eqref{e:der_n-2_i}. Thus, the result follows from the equality case of Lemma \ref{p:der_n-2_i}.
\end{proof}

The following classical result provides us with a sufficient condition for a sequence of positive real numbers to be
the (relative) quermassintegrals of certain convex bodies $K,E\in\K^n$:
\begin{prop}[\cite{She}]\label{p:shephard charact}
If the sequence of positive real numbers $(a_0,\dots,a_n)$ is log-concave, then there exist
simplices $K,E\in\K^n$ such that $\W_i(K;E)=a_i$, $0\leq i\leq n$.
\end{prop}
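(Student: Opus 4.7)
The plan is to realize any log-concave positive sequence $(a_0,\dots,a_n)$ as the relative quermassintegrals of a pair of simplices by exhibiting a sufficiently rich parametric family of simplex pairs and showing that the induced map into $\R_{>0}^{n+1}$ is surjective onto the positive log-concave cone.

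First I would identify an appropriate family. A natural attempt is to fix a canonical reference simplex $E = \conv\{0, e_1, \dots, e_n\}$ and take $K$ of ``orthoscheme'' type with vertices $0, \lambda_1 e_1, \dots, \lambda_n e_n$; a direct multilinearity computation then expresses $\W_i(K; \mu E)$ as a symmetric polynomial in $\lambda_1,\dots,\lambda_n$ weighted by a power of $\mu$, so that matching $\W_i(K;\mu E) = a_i$ reduces to prescribing the elementary symmetric functions of the $\lambda_j$. However, this orthoscheme family is not rich enough: by Newton's theorem the sequences it realizes correspond to polynomials with $n$ positive real roots, a strictly smaller class than the log-concave cone (one can check that already for $n=3$ there are log-concave sequences whose associated normalized symmetric functions yield polynomials with complex roots). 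A more flexible class must therefore be used, for instance simplex pairs with fully variable vertex positions.

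Second, I would use the Steiner formula \eqref{eq:steiner-vol} to express each $\W_i(K;E)$ as a polynomial in the parameters of the chosen family, yielding a smooth map $\Psi$ to $\R_{>0}^{n+1}$ whose image lies in the closed positive log-concave cone by the Aleksandrov--Fenchel inequalities. The remaining task, surjectivity onto the interior, can be approached by induction on $n$: assuming the result in dimension $n-1$, I would build an $n$-simplex as a pyramid with apex at a chosen height above an $(n-1)$-simplex realizing a truncated target sequence, then use the Steiner formula for quermassintegrals \eqref{eq:steiner-quermass} to compute how the $\W_i$ transform under this cone construction; the new height parameter, together with an overall dilation, should provide the extra degrees of freedom needed to realize the new terms while preserving the lower ones.

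The principal obstacle is precisely this surjectivity step: as the orthoscheme analysis shows, naive choices of family fall strictly short, and for a flexible family one must track boundary behaviour carefully so as not to miss parts of the cone. Shephard's original proof \cite{She} achieves this by an explicit inductive construction that exploits the log-concavity hypothesis at each stage, with a careful control of how the leading coefficients transform under the cone step; making the recursion precise, and verifying that the image of $\Psi$ coincides with the entire positive log-concave cone, is the principal technical burden of the argument.
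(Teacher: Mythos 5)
The paper offers no proof of this proposition: it is quoted verbatim from Shephard \cite{She} and used as a black box, so there is nothing internal to compare your argument against. Judged on its own terms, your proposal has a genuine gap at exactly the decisive point. You set up a parametric family of simplex pairs, observe (correctly) that a naive family is too small, pass to ``simplex pairs with fully variable vertex positions'', and then state that the surjectivity of the resulting map onto the positive log-concave cone ``is the principal technical burden'' and that ``Shephard's original proof achieves this''. That is not a proof of surjectivity; it is a deferral of the entire content of the statement to the very reference you are supposed to be reproving. No explicit construction is given, no induction step is actually carried out, and no use is made of the log-concavity hypothesis beyond noting that the image must lie in the closed log-concave cone by Aleksandrov--Fenchel.

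Two of the intermediate claims are also unsound as stated. First, for $K=\conv\{0,\lambda_1e_1,\dots,\lambda_ne_n\}$ and $E=\conv\{0,e_1,\dots,e_n\}$ the quermassintegrals $\W_i(K;E)$ are \emph{not} (normalized) elementary symmetric functions of the $\lambda_j$: multilinearity of mixed volumes applies to Minkowski combinations $\sum\lambda_jK_j$, not to independent rescalings of the vertices of a single simplex, and already in the plane $K+E$ fails to be a triangle when $\lambda_1\neq\lambda_2$, so the analogy with boxes breaks down. Second, the Steiner formula \eqref{eq:steiner-quermass} describes how quermassintegrals change under passing to the outer parallel body $K+\lambda E$; it says nothing about erecting a pyramid over an $(n-1)$-simplex, so it cannot be the tool that propagates the inductive step. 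To repair the argument you would need an explicit construction --- for instance, Shephard works with simplices having a common normal fan, where the mixed volumes become explicit polynomials in the support numbers and the log-concavity of $(a_0,\dots,a_n)$ can be fed directly into the choice of parameters --- together with a genuine verification that every log-concave positive sequence is attained.
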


A refined version of this result can be found in \cite{HHCS}. The most general problem of characterizing whether a finite sequence of non-negative numbers are the mixed volumes of $m\in\N$ convex bodies remains open, except the $2$-dimensional case for $m=3$, which was solved by Heine in \cite{Heine}.

\smallskip

Using Proposition \ref{p:shephard charact} we may assure on one hand that there exist convex bodies in $\R^n$
for which \eqref{e:1} does not hold, and that there exist convex bodies, which are not Minkowski sums one of each other, satisfying \eqref{e:1}, on the other hand. We collect the statement here, for the sake of completeness:

\begin{prop}\hfill\label{p:simplices}
\begin{enumerate}
\item There are convex bodies for which \eqref{e:1} does not hold.

\item There exist pairs of convex bodies $K,E\in\K^n$ for which \eqref{e:1} does hold and such that $K\neq M+E$ for all $M\in\K^n$.
\end{enumerate}
\end{prop}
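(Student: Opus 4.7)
The plan is to apply Shephard's realization result (Proposition \ref{p:shephard charact}) to carefully chosen log-concave sequences of positive real numbers.

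For (i), I would exhibit a log-concave sequence $(a_0,\dots,a_n)$ of positive reals that violates \eqref{e:1} for at least one triple $(i,j,k)$, and then invoke Proposition \ref{p:shephard charact} to produce simplices $K,E\in\K^n$ with $\W_i(K;E)=a_i$. A natural candidate is $a_0=1$ and $a_i=2$ for $1\leq i\leq n$: log-concavity $a_i^2\geq a_{i-1}a_{i+1}$ is immediate in every position, while the triple $(0,1,2)$ gives $a_0-2a_1+a_2=-1<0$, so \eqref{e:1} fails. The simplices supplied by Proposition \ref{p:shephard charact} then provide the required counterexample.

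For (ii), the structural input I would rely on is the classical fact that an $n$-dimensional simplex in $\R^n$ is indecomposable, i.e.\ its only $n$-dimensional summands, up to translation, are its non-negative homothets. Consequently, if $K,E$ are $n$-dimensional simplices with $K=M+E$ for some $M\in\K^n$, then $E$ is a summand of the simplex $K$, hence $E=\lambda K$ up to translation for some $\lambda\in(0,1]$; equivalently $K$ is a translate of $\mu E$ with $\mu=1/\lambda$, and $\W_i(\mu E;E)=\mu^{n-i}\vol(E)$ forces the sequence $(\W_0(K;E),\dots,\W_n(K;E))$ to be a geometric progression. It therefore suffices to produce a log-concave positive sequence that is also \emph{convex} (so that \eqref{e:1} holds for every triple, by the same chain of inequalities used in the proof of Theorem \ref{t:1}) but is not a geometric progression, and to apply Proposition \ref{p:shephard charact} once again. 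For $n=2$ the sequence $(1,2,7/2)$ does the job: it is log-concave ($2^2=4\geq 7/2=1\cdot 7/2$), convex ($1-4+7/2=1/2\geq 0$), and clearly not geometric (since $2/1\neq (7/2)/2$); in higher dimensions, completing it to a log-concave and convex non-geometric sequence is routine.

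The only non-routine ingredient is the indecomposability of simplices invoked in (ii); once that is in hand, the remaining verifications (log-concavity, convexity and failure of geometricity of the chosen sequences, and the implication from convexity of the quermassintegral sequence to \eqref{e:1}) are immediate from what has already been set up in Section~\ref{s: ineq}.
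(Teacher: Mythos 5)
Your proof is correct, and while part (i) matches the paper's strategy, part (ii) takes a genuinely different route. For (i) both arguments feed a log-concave but non-convex positive sequence into Proposition \ref{p:shephard charact} and read off the failure of \eqref{e:1} at the triple $(0,1,2)$; the paper uses $(1,2,1)$ in the plane, you use $(1,2,\dots,2)$, which has the minor advantage of working verbatim in every dimension. For (ii) the two proofs differ in how the decomposition $K=M+E$ is excluded for the realized simplices. The paper stays entirely inside its own machinery: it realizes the arithmetic sequence $(3/4,1/2,1/4)$, which attains equality in \eqref{e:der_n-2_i}, so that a decomposition would force $\dim(M)\leq1$ by the equality case of Lemma \ref{p:der_n-2_i}, and this is then discarded via $\vol(K)=3/4\neq1/4=\vol(E)$. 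You instead import the classical indecomposability of simplices: any full-dimensional summand of a simplex is a homothet of it, so the sequence $\bigl(\W_0(K;E),\dots,\W_n(K;E)\bigr)$ would have to be a geometric progression, and you realize a convex, log-concave, non-geometric sequence such as $(1,2,7/2)$. Both routes are valid; the paper's buys self-containedness, while yours relies on an external (though standard) structural fact and in exchange does not need the realized sequence to hit equality in \eqref{e:der_n-2_i}. Your two auxiliary claims also hold: convexity of the sequence alone yields \eqref{e:1}, since \eqref{e:1} is equivalent to $\bigl(\W_i(K;E)-\W_j(K;E)\bigr)/(j-i)\geq\bigl(\W_j(K;E)-\W_k(K;E)\bigr)/(k-j)$ and only monotonicity of consecutive differences enters; and the higher-dimensional completion is indeed routine, e.g.\ any non-constant decreasing arithmetic progression of positive numbers is convex, strictly log-concave and non-geometric.
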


\begin{proof}
The idea for both assertions is to use Proposition \ref{p:shephard charact}.
To prove (i) it is enough to find a log-concave sequence of positive 
real numbers which is not convex. We consider $(1,2,1)$, whose log-concavity (cf. \eqref{e: log_concave}) is clearly fulfilled and thus,
it ensures the existence of planar simplices $K,E\in\K^2$ with $\W_0(K;E)=\W_2(K;E)=1$ and $\W_1(K;E)=2$,
and for which \eqref{e:1} does not hold.

To prove (ii), we argue in the same way with the sequence $(3/4,1/2,1/4)$, which is both log-concave and convex. Since this sequence satisfies \eqref{e:der_n-2_i} with equality, and $\vol(K)=3/4\neq1/4=\vol(E)$, the simplex $E$ cannot be a summand of the simplex $K$.
\end{proof}

Taking Theorem \ref{t:1} into account, together with the log-concavity of the quermassintegrals of any convex body, it is natural to ask whether the log-concavity together with the convexity (cf. Theorem \ref{t:1}) of the sequence of quermassintegrals of a Minkowski sum could provide us with better inequalities or, eventually, equalities in some known inequalities. Unfortunately, so far we have no concluding answers to this issue.

\section{Further inequalities}\label{s: further ineq}
In the following we will write, for $K,E\in\K^n$,
\[f_{K;E}(z)=\sum_{i=0}^{n}\binom{n}{i}\W_{i}(K;E)z^i\]
to denote the (relative) Steiner polynomial of $K$, regarded as a formal polynomial in a complex variable $z\in\C$.
Notice that, for $z\geq0$, $f_{K;E}(z)$ yields the volume of $K+zE$ (cf. \eqref{eq:steiner-vol}).
Moreover, we consider the derivatives of Steiner polynomials in the variable $z$, namely \[f_{K;E}^{(j)}(z)=n(n-1)\cdots(n-j+1)\sum_{k=0}^{n-j}\binom{n-j}{k}\W_{j+k}(K;E)z^k,\] for $0\leq j\leq n$.
Of particular interest for our purposes will be the quantities $f_i^{(j)}$ given by
\begin{equation}\label{e: fij}
f_i^{(j)}=\sum_{k=0}^{n-j}\binom{n-j}{k}\W_{i+k}(K;E)(-1)^k
\end{equation}
for any $0\leq i\leq j\leq n$ (and fixed $K,E\in\K^n$), which can be regarded as formal extensions (up to the constant
$n(n-1)\cdots(n-j+1)$) of the value at $z=-1$ of the $j$-th derivative of the Steiner polynomial, since \[f_j^{(j)}=n(n-1)\cdots(n-j+1)f_{K;E}^{(j)}(-1).\]

\smallskip

Now we prove some generalizations of the inequalities we have dealt with in the previous section.
To avoid straightforward (but a bit lengthy) computations throughout the (proof of) Proposition \ref{p: f_i(j)>0},
first we collect here some direct relations for combinatorial numbers.

\begin{lema}\label{l:ineqs_binom}
Let $N,I,m\in\N\cup\{0\}$. If $I\geq m$ then
\[
\binom{N}{I}-\binom{N-1}{I-1}\binom{m}{1}+\dots+
(-1)^{m-1}\binom{N-m+1}{I-m+1}\binom{m}{m-1}+(-1)^m\binom{N-m}{I-m}\geq0.
\]
Moreover, if $I<m$ then
\[
\begin{split}
\binom{N}{I}-\binom{N-1}{I-1}\binom{m}{1}+\dots+(-1)^I\binom{N-I}{0}\binom{m}{I}\geq0.
\end{split}
\]
\end{lema}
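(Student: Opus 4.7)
The plan is to identify both inequalities of the lemma with the single closed-form identity
\[
\sum_{k=0}^{m}(-1)^k\binom{N-k}{I-k}\binom{m}{k}=\binom{N-m}{I},
\]
whose right-hand side is non-negative in the range $N\geq m$ in which the lemma will be applied in the sequel. The case $I\geq m$ is exactly this identity, with the sum on the left running from $k=0$ to $k=m$. The case $I<m$ is the same identity in disguise: under the standard convention $\binom{N-k}{I-k}=0$ whenever $I-k<0$, all terms of index $k>I$ vanish, so the sum truncated at $k=I$ still evaluates to $\binom{N-m}{I}$.

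I would prove the identity by induction on $m$. The base case $m=0$ reduces to $\binom{N}{I}=\binom{N}{I}$. For the inductive step, use Pascal's rule \eqref{e: binom} in the form $\binom{m+1}{k}=\binom{m}{k}+\binom{m}{k-1}$ to split the sum for $m+1$ into two pieces. The first piece is the inductive hypothesis applied with parameters $(N,I,m)$, producing $\binom{N-m}{I}$. The second, after reindexing $j=k-1$ and extracting a sign, is the inductive hypothesis applied with parameters $(N-1,I-1,m)$, contributing $-\binom{N-m-1}{I-1}$. A final use of Pascal's rule, $\binom{N-m}{I}-\binom{N-m-1}{I-1}=\binom{N-m-1}{I}$, matches the desired right-hand side and closes the induction. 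Equivalently, one may view the sum as the $m$-th iterated backward difference of the sequence $k\mapsto\binom{N-k}{N-I}$, each step of which lowers both indices by one, producing $\binom{N-m}{N-I-m}=\binom{N-m}{I}$ after $m$ iterations.

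I do not anticipate any real obstacle; the whole argument is elementary bookkeeping with Pascal's rule. The only points requiring a moment of care are verifying in the $I<m$ case that the vanishing convention indeed makes the truncated sum equal to the full one, and confirming the range $N\geq m$ under which $\binom{N-m}{I}$ is manifestly non-negative—both of which become transparent once the identity is in hand.
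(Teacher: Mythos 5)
Your proposal is correct and follows essentially the same route as the paper: both evaluate the alternating sum in closed form as $\binom{N-m}{I}\geq0$ by repeated application of Pascal's rule \eqref{e: binom}, the paper via a telescoping chain reducing $(N,m)$ to $(N-1,m-1)$ at each step (and reducing the case $I<m$ to the case $I\geq m$), you via induction on $m$ with the two cases unified through the vanishing convention for $\binom{N-k}{I-k}$ when $I-k<0$. The packaging as a single identity is marginally cleaner but the content is identical.
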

\begin{proof}
Assume first that $I\geq m$. Then, by using recursively \eqref{e: binom}, we have
\[
\begin{split}
&\binom{N}{I}-\binom{N-1}{I-1}\binom{m}{1}+\dots+
(-1)^{m-1}\binom{N-m+1}{I-m+1}\binom{m}{m-1}+(-1)^m\binom{N-m}{I-m}\\
&=\binom{N-1}{I}-\binom{N-2}{I-1}\binom{m-1}{1}+\dots+
(-1)^{m-2}\binom{N-m+1}{I-m+2}\binom{m-1}{m-2}\\
&+(-1)^{m-1}\binom{N-m}{I-m+1}=\dots=\binom{N-m+1}{I}-\binom{N-m}{I-1}=\binom{N-m}{I}\geq0.
\end{split}
\]
Now, if $I<m$, and using again recursively \eqref{e: binom} we get that
\[
\begin{split}
&\binom{N}{I}-\binom{N-1}{I-1}\binom{m}{1}+\dots+(-1)^I\binom{N-I}{0}\binom{m}{I}=\binom{N-1}{I}\\
&-\binom{N-2}{I-1}\binom{m-1}{1}+\dots+(-1)^{I-1}\binom{N-I+1}{1}\binom{m-1}{I-1}\\
&+(-1)^I\binom{N-I-1}{0}\binom{m-1}{I}=\dots=\binom{N-m+I}{I}-\binom{N-m+I-1}{I-1}\binom{I}{1}\\
&+\dots+(-1)^{I-1}\binom{N-m+1}{1}\binom{I}{I-1}+(-1)^I\binom{N-m}{0},
\end{split}
\]
which, by the previous case, is non-negative. This finishes the proof.
\end{proof}

We note that, in terms of the quantities $f_i^{(j)}$ (see \eqref{e: fij}), Lemma \ref{p:der_n-2_i} yields that \[f_i^{(n-2)}\geq0\] for every $0\leq i\leq n-2$. Here we extend these inequalities to any value of $j$, with
$0\leq i\leq j\leq n$.
\begin{prop}\label{p: f_i(j)>0}
Let $M,E\in\K^n$ with $K=M+E$ and $\dim(E)=n$.
Then, for every $0\leq i\leq j\leq n$,
\begin{equation*}\label{e:der_j_i}
f_i^{(j)}\geq 0.
\end{equation*}
Equality holds if and only if $\dim(M)\leq n-j-1$.
\end{prop}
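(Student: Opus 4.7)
The plan is to extend the approach used in Lemma \ref{p:der_n-2_i} to higher-order differences: insert the Steiner formula \eqref{eq:steiner-quermass} for $\W_{i+k}(K;E)=\W_{i+k}(M+E;E)$ into the definition \eqref{e: fij} of $f_i^{(j)}$, swap the order of summation, and recognise the resulting inner coefficient as the alternating binomial sum estimated in Lemma \ref{l:ineqs_binom}.

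Carrying this out, I would first write
\[
f_i^{(j)}=\sum_{k=0}^{n-j}(-1)^k\binom{n-j}{k}\sum_{\ell=0}^{n-i-k}\binom{n-i-k}{\ell}\W_{i+k+\ell}(M;E),
\]
then set $r=k+\ell$ and regroup by $r$ to obtain
\[
f_i^{(j)}=\sum_{r=0}^{n-i}\W_{i+r}(M;E)\,A(r),\qquad A(r)=\sum_{k=0}^{\min(r,n-j)}(-1)^k\binom{n-j}{k}\binom{n-i-k}{r-k}.
\]
With the choice $N=n-i$, $I=r$, $m=n-j$ (so $m\le N$, since $j\ge i$), the coefficient $A(r)$ is exactly the alternating sum treated in Lemma \ref{l:ineqs_binom}, the $I\ge m$ or $I<m$ branch being invoked according to whether $r\ge n-j$ or $r<n-j$. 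Consequently $A(r)\ge 0$ for every $r$, and since the quermassintegrals $\W_{i+r}(M;E)$ are non-negative, this gives $f_i^{(j)}\ge 0$.

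For the equality case, a closer reading of the telescoping in the proof of Lemma \ref{l:ineqs_binom} shows that $A(r)$ in fact collapses to the single binomial $\binom{j-i}{r}$. Thus
\[
f_i^{(j)}=\sum_{r=0}^{j-i}\binom{j-i}{r}\W_{i+r}(M;E),
\]
and since the summand with $r=j-i$ equals $\W_j(M;E)$ and has coefficient $1$, the vanishing $f_i^{(j)}=0$ forces $\W_j(M;E)=0$, which, because $\dim(E)=n$, is equivalent to $\dim(M)\le n-j-1$. Conversely, if $\dim(M)\le n-j-1$ then $\W_s(M;E)=0$ for every $s\in\{i,\dots,j\}$, and every summand vanishes.

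The main technical obstacle is the book-keeping in the change of summation variable and verifying that the resulting coefficients $A(r)$ really fit the two branches of Lemma \ref{l:ineqs_binom}, together with extracting the closed form $\binom{j-i}{r}$ needed for the equality statement. Once this is in place, neither log-concavity nor any Aleksandrov--Fenchel type estimate is needed; only the Steiner formula \eqref{eq:steiner-quermass} and the combinatorial identity of Lemma \ref{l:ineqs_binom} enter the argument.
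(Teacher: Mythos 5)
Your proposal is correct and follows essentially the same route as the paper: expand via the Steiner formula \eqref{eq:steiner-quermass}, regroup by the index of $\W_{i+r}(M;E)$, and identify the resulting coefficients with the alternating binomial sums of Lemma \ref{l:ineqs_binom}. The only (welcome) refinement is that you make explicit the closed form $\binom{j-i}{r}$ of these coefficients, which is already implicit in the telescoping of the paper's proof of Lemma \ref{l:ineqs_binom} and which cleanly justifies the equality characterization via $\W_j(M;E)=0$.
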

\begin{proof}
From the Steiner formula for quermassintegrals \eqref{eq:steiner-quermass}, and writing $m=n-j$, we have
\[
\begin{split}
f_i^{(j)}&=\sum_{k=0}^m\binom{m}{k}\left(\sum_{r=0}^{n-i-k}\binom{n-i-k}{r}\W_{i+k+r}(M;E)\right)(-1)^k\\
&=\sum_{r=0}^{n-i}\binom{n-i}{r}\W_{i+r}(M;E) -
m\sum_{r=0}^{n-i-1}\binom{n-i-1}{r}\W_{i+1+r}(M;E)+
\dots\\
&+(-1)^m\sum_{r=0}^{n-i-m}\binom{n-i-m}{r}\W_{i+m+r}(M;E)\\
&=\sum_{r=0}^{n-i-m}\W_{i+m+r}(M;E)\left(\binom{n-i}{r+m}\binom{m}{0}-\binom{n-i-1}{r+m-1}\binom{m}{1}+\dots\right.\\
&\left.+(-1)^{m-1}\binom{n-i-m+1}{r+1}\binom{m}{m-1}+(-1)^m\binom{n-i-m}{r}\binom{m}{m}\right)\\
&+\W_{i+m-1}(M;E)\left(\binom{n-i}{m-1}\binom{m}{0}-\binom{n-i-1}{m-2}\binom{m}{1}+\dots\right.\\
&\left.+(-1)^{m-2}\binom{n-i-m+2}{1}\binom{m}{m-2}+(-1)^{m-1}\binom{n-i-m+1}{0}\binom{m}{m-1}\right)\\
&+\dots+\W_{i+1}(M;E)\left(\binom{n-i}{1}\binom{m}{0}-\binom{n-i-1}{0}\binom{m}{1}\right)+\W_i(M;E).
\end{split}
\]
From Lemma \ref{l:ineqs_binom} we obtain that all the summands in the above expression are
non-negative. Moreover, since the sole non-zero coefficients involved are those with indexes between
$i$ and $j$, we have that $f_i^{(j)}=0$ if and only if $\W_{j}(M;E)=0$. This completes the proof.
\end{proof}

\begin{lema}\label{l: diff i der}
Let $M,E\in\K^n$ with $K=M+E$ and $\dim(E)=n$.
Then, for every $0\leq i<j\leq n$,
\begin{equation*}\label{e:id der}
f_i^{(j)}-f_{i+1}^{(j)}=f_i^{(j-1)}\geq 0.
\end{equation*}
\end{lema}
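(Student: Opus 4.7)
The plan is to first establish the identity
\[
f_i^{(j-1)} = f_i^{(j)} - f_{i+1}^{(j)},
\]
and then obtain the stated non-negativity as a direct consequence of Proposition \ref{p: f_i(j)>0} applied with the index pair $(i, j-1)$: the hypothesis $0\leq i\leq j-1\leq n$ is guaranteed by the assumption $0\leq i<j\leq n$, so $f_i^{(j-1)}\geq 0$ requires no separate argument.

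For the identity itself, I would start from the definition \eqref{e: fij} and compute
\[
f_i^{(j)} - f_{i+1}^{(j)} = \sum_{k=0}^{n-j}\binom{n-j}{k}\bigl(\W_{i+k} - \W_{i+1+k}\bigr)(-1)^k.
\]
A standard reindexing $k\mapsto k-1$ on the subsum carrying the terms $\W_{i+1+k}$ rewrites the right-hand side as
\[
\sum_{k=0}^{n-j}\binom{n-j}{k}\W_{i+k}(-1)^k + \sum_{k=1}^{n-j+1}\binom{n-j}{k-1}\W_{i+k}(-1)^k.
\]
On the other hand, expanding $f_i^{(j-1)}$ from \eqref{e: fij} and applying Pascal's identity \eqref{e: binom} in the form $\binom{n-j+1}{k}=\binom{n-j}{k}+\binom{n-j}{k-1}$ splits the single defining sum into exactly these two pieces, with the boundary conventions $\binom{n-j}{-1}=0$ and $\binom{n-j}{n-j+1}=0$ reconciling the summation limits.

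There is no real obstacle here: the argument is a short combinatorial manipulation followed by a citation of Proposition \ref{p: f_i(j)>0}. The only point requiring mild attention is the endpoint behavior in Pascal's rule at $k=0$ and $k=n-j+1$, but both relevant binomial coefficients vanish, so the decomposition goes through cleanly and the identity (and hence the inequality) follows.
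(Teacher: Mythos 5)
Your proposal is correct and follows essentially the same route as the paper: both establish the identity $f_i^{(j)}-f_{i+1}^{(j)}=f_i^{(j-1)}$ by reindexing the shifted sum and invoking Pascal's identity \eqref{e: binom}, and both then deduce the non-negativity from Proposition \ref{p: f_i(j)>0}. The only cosmetic difference is that the paper combines the two sums directly into $f_i^{(j-1)}$ while you split $f_i^{(j-1)}$ into the two pieces; these are the same computation read in opposite directions.
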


\begin{proof}
By using \eqref{e: binom} we get that
\[
\begin{split}
f_i^{(j)}-f_{i+1}^{(j)}&=\sum_{k=0}^{n-j}\binom{n-j}{k}(-1)^k\W_{i+k}-\sum_{k=0}^{n-j}\binom{n-j}{k}(-1)^k\W_{i+1+k}\\
&=\W_i+\sum_{l=1}^{n-j}(-1)^l\W_{i+l}\left[\binom{n-j}{l}+\binom{n-j}{l-1}\right]
+(-1)^{n-j+1}\W_{i+1+n-j}\\
&=\W_i+\sum_{l=1}^{n-j}(-1)^l\W_{i+l}\binom{n-j+1}{l}
+(-1)^{n-j+1}\W_{i+1+n-j}=f_i^{(j-1)},
\end{split}
\]
which, jointly with Proposition \ref{p: f_i(j)>0}, finishes the proof.
\end{proof}

We conclude the paper by showing an extension of Theorem \ref{t:1}. Indeed, the latter is derived by setting $l=n$ in
the following result.
\begin{teor}
Let $M,E\in\K^n$ with $K=M+E$ and $\dim(E)=n$.
Then, for every $0\leq i<j<k\leq l\leq n$,
\begin{equation}\label{e:2}
(k-j)f_i^{(l)}+(i-k)f_{j}^{(l)}+(j-i)f_k^{(l)}\geq0.
\end{equation}
Equality holds if and only if $\dim(M)\leq n-l+1$.
\end{teor}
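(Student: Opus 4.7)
The plan is to mirror the proof of Theorem~\ref{t:1}, replacing each quermassintegral $\W_i$ by the quantity $f_i^{(l)}$ and using Lemma~\ref{l: diff i der} as the source of monotonicity.

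First, I would apply Lemma~\ref{l: diff i der} to the sequence $(f_i^{(l-1)})_i$ itself, obtaining
\[f_i^{(l-1)} - f_{i+1}^{(l-1)} = f_i^{(l-2)} \geq 0\]
by Proposition~\ref{p: f_i(j)>0}. Hence $(f_i^{(l-1)})_{i=0}^{l-1}$ is non-increasing, and so for $0 \leq i < j < k \leq l$ one has
\[f_i^{(l-1)} \geq f_{j-1}^{(l-1)} \geq f_{k-1}^{(l-1)} \geq 0,\]
with the last inequality again being Proposition~\ref{p: f_i(j)>0}.

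Next, using Lemma~\ref{l: diff i der} in the form $f_i^{(l)} - f_{i+1}^{(l)} = f_i^{(l-1)}$, I would telescope exactly as in the proof of Theorem~\ref{t:1}:
\[f_i^{(l)} - f_j^{(l)} = \sum_{r=i}^{j-1} f_r^{(l-1)} \geq (j-i)\,f_{j-1}^{(l-1)}, \qquad f_j^{(l)} - f_k^{(l)} = \sum_{r=j}^{k-1} f_r^{(l-1)} \leq (k-j)\,f_{j-1}^{(l-1)}.\]
Combining these inequalities gives $(f_i^{(l)} - f_j^{(l)})/(j-i) \geq (f_j^{(l)} - f_k^{(l)})/(k-j)$, which rearranges directly to \eqref{e:2}.

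For the equality case, equality throughout the telescoping chain forces $f_r^{(l-1)}$ to be constant on $\{i,i+1,\ldots,k-1\}$; by Lemma~\ref{l: diff i der} this is equivalent to $f_r^{(l-2)} = 0$ for $r = i, \ldots, k-2$, which by Proposition~\ref{p: f_i(j)>0} is in turn equivalent to $\dim(M) \leq n - (l-2) - 1 = n-l+1$. This is consistent with the specialization $l = n$ recovering the equality condition $\dim(M)\leq 1$ of Theorem~\ref{t:1}. I do not anticipate a serious obstacle: the whole argument is essentially the observation that iterating Lemma~\ref{l: diff i der} promotes $(f_i^{(l)})_{i=0}^l$ to a convex sequence in $i$, at which point \eqref{e:2} is the reverse of inequality~\eqref{e: conc seq dif2} for convex rather than concave sequences.
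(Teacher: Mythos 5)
Your proposal is correct and follows essentially the same route as the paper: the paper's proof simply declares itself ``completely analogous'' to that of Theorem~\ref{t:1} with each $\W_r$ replaced by $f_r^{(l)}$ and Lemma~\ref{l: diff i der} playing the role of Lemma~\ref{p:der_n-2_i}, which is exactly the telescoping argument you spell out. Your treatment of the equality case (constancy of $f_r^{(l-1)}$ on the whole range, reduced via $f_r^{(l-2)}=0$ to the equality case of Proposition~\ref{p: f_i(j)>0}) likewise matches the paper's and correctly yields $\dim(M)\leq n-l+1$.
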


\begin{proof}
The proof is completely analogous to the proof of Theorem \ref{t:1}, interchanging every involved $r$-th quermassintegrals by $f_r^{(l)}$, and  using Lemma \ref{l: diff i der} instead of Lemma \ref{p:der_n-2_i}.
Finally, \eqref{e:2} holds with equality if and only if the same holds for
\[f_{j-1}^{(l)}-f_j^{(l)}=f_{j-1}^{(l-1)}\geq f_{j}^{(l-1)}=f_{j}^{(l)}-f_{j+1}^{(l)},\]
which is equivalent to $f_{j-1}^{(l-2)}\geq0$. Then the result follows from the equality case of
Proposition \ref{p: f_i(j)>0}.
\end{proof}





\noindent {\it Acknowledgements.} We would like to thank the anonymous referee for his/her very valuable
suggestions which have allowed us to considerably improve the presentation of the manuscript.

\end{document}